\newtheorem{theorem}{Theorem}
\newtheorem{proposition}{Proposition}
\newtheorem{lemma}{Lemma}
\newtheorem{definition}{Definition}
\newtheorem{remark}{Remark}
\newtheorem{example}{Example}
\newcommand\condExpectA[2]{\mathbb{E}\left[\left.#1\right|#2\right]}
\newcommand\Paverage[1]{\varphi\qty(#1)}
\newcommand\rhoConditional[2]{\mathbb{E}_{#1}\left[#2\right]}
\title{\LARGE \bf Lyapunov Stability Analysis for Invariant States of Quantum Systems}
\author{Muhammad F. Emzir, Ian R. Petersen*, and Matthew J. Woolley
	\thanks{A version of this paper has been accepted at 56th {IEEE} Conference on Decision and Control 2017.}%
	\thanks{*This work was supported by the Australian Research Council under grant No. FL110100020 and the AFOSR.}%
	\thanks{Ian R. Petersen is with ANU College of 	Engineering \& Computer Science, The Australian National University, Canberra, Australia {\tt\small ian.petersen@anu.edu.au}}
	\thanks{Muhammad F. Emzir and  Matthew J. Woolley are with School of Engineering and Information Technology, University of New South Wales, ADFA, Canberra, ACT 2600, Australia
		{\tt\small m.emzir@student.adfa.edu.au}}%
}
\begin{document}

\maketitle
\thispagestyle{empty}
\pagestyle{empty}

\begin{abstract}
In this article, we propose a Lyapunov stability approach to analyze the convergence of the density operator of a quantum system. 
In contrast to many previously studied convergence analysis methods for invariant density operators which use weak convergence, in this article we analyze the convergence of density operators by considering the set of density operators as a subset of Banach space. We show that the set of invariant density operators is both closed and convex, which implies the impossibility of having multiple isolated invariant density operators. We then show how to analyze the stability of this set via a candidate Lyapunov operator. 
\end{abstract}

\section{INTRODUCTION}
There are two main approaches to design a feedback controller for a quantum system. The more conventional approach is to compute the feedback input based on measurements of the system, which is known as measurement-based feedback control (MBFC). This method has been well studied in the last two decades \cite{Wiseman1993,Belavkin1999,wiseman2010quantum}. Another approach is to construct the feedback controller as a quantum system that coherently interacts with the controlled system. This method, which is known as coherent feedback control, has recently received considerable interest  \cite{Wiseman1994,James2008,Nurdin2009}.
There are many conditions in which coherent feedback control potentially offers advantages over MBFC; e.g., see \cite{Nurdin2009,Hamerly2012,Yamamoto2014}.\\
There have been many results on analytical tools to analyze the convergence of quantum system dynamics based on  stability analysis of quantum systems subject to measurement, \cite{somaraju2013,Amini2013,amini2014}. 
However, in the absence of measurement, as in the case of coherent control, there are a few of tools available to analyze the stability behavior of quantum dynamical systems. Many established results on linear coherent control of quantum systems are based on stochastic stability criteria involving first and second moments, \cite{James2008}. However, to extend coherent control design beyond the linear case, one should consider more general stability criteria, other than first and second moment convergence.\\
From the classical probability theory point of view, we can consider a system's density operator as a probability measure. Therefore, the convergence of a density operator can be analyzed in a similar way to the convergence of a sequence of probability measures\cite{Parthasarathy1967}. In fact, in the mathematical physics literature, the stability of quantum systems has been analyzed using quantum Markov semigroups via the quantum analog of probability measure convergence \cite{Fagnola2001,Fagnola2003}.  In essence, \cite{Fagnola2003} establishes conditions of the existence of invariant states, as well as convergence to these states given that the invariant state $\rho$ is faithful. That is, for any positive operator $A$, $\trace\qty(A \rho) = 0$ if and only if $A=0$.
\\ 
In the classical control theory, on the other hand, the Lyapunov approach is one of the fundamental tools to examine the stability of classical dynamical systems without solving the dynamic equation \cite{khalil2002nonlinear}. 
There are some important results on the stability of invariance density operator convergence in the Schr\"{o}dinger picture \cite{Ticozzi2008,Wang2010}. In this scheme, Lyapunov analysis is often used, where the Lyapunov function is defined as a function of the density operators.
\\
Recently, \cite{Pan2014} extended results on quantum Markov semigroup invariant state analysis to the Heisenberg picture, which is closely related to Lyapunov stability analysis in the classical setting. Stability analysis in the Heisenberg picture as given in \cite{Pan2014} is interesting for two reasons. The first is that since it is considered in the Heisenberg picture, the stability condition derived is easily connected to classical Lyapunov stability analysis, which is preferable for most control theorists. The second is that, while the stability condition is stated in terms of a Lyapunov observable, it leads to the same conclusion as the quantum Markov semigroup convergence. 
\\
The result of \cite{Pan2014} required that for all non-trivial projection operators $P$,  $P\mathbb{L}^\dagger\qty(1-P)\mathbb{L}P \neq 0$, where $\mathbb{L}$ is the coupling operator of the quantum system.
The weakness of this approach is that in many cases, we deal with quantum systems which have invariant density operators which are not faithful. Furthermore, even when the invariant density operators are faithful, validating the inequality given in \cite[Theorem 3, Theorem 4]{Pan2014} for all non-trivial projection operators is not straight forward; see also \cite[Example 4]{Pan2014}.\\
We aim to establish a stability criterion which is similar to Lyapunov stability theory in classical systems to examine the convergence of the system's density operator. We show that if there is a self-adjoint operator that has a strict minimum value at the invariant state and its generator satisfies a particular inequality condition, then we can infer Lyapunov, asymptotic, and exponential stability in both local and global settings. 
\\
We refer the reader to the following monographs for an introduction to quantum probability \cite{bouten2007introduction,fagnola1999quantum}.
\subsection{Notation}
The Identity operator will be denoted by $1$. A class of operators will be denoted by fraktur type face; e.g., the class of bounded linear operators on a Hilbert space $\mathscr{H}$ $\mathfrak{B}\left(\mathscr{H}\right)$. We use $\norm{\cdot}_{\infty}$ to denote the uniform operator norm on $\mathfrak{B}\left(\mathscr{H}\right)$, and $\norm{X}_1 = \trace\qty(\abs{X})$ for any trace-class operator $X$. The set of density operators (positive operators with unity trace) on the Hilbert space $\mathscr{H}$ is denoted by $\mathfrak{S}\qty(\mathscr{H})$.
Bold letters (e.g. $\mathbf{y}$) will be used to denote a matrix whose elements are operators on a Hilbert space. The Hilbert space adjoint is indicated by $^{\ast}$, while the complex adjoint transpose will be denoted by $\dagger$; i.e.,  $\left(\mathbf{X}^{\ast}\right)^{\top} = \mathbf{X}^{\dagger}$. For single-element operators, we will use $*$ and $\dagger$ interchangeably. The commutator matrix of $\mathbf{x}$ and $\mathbf{y}$ is given by $[\mathbf{x}, \mathbf{y} ] = \mathbf{x}\mathbf{y}^\top - \left(\mathbf{y}\mathbf{x}^\top\right)^\top$. 
\section{Preliminaries}
In this section, we will describe some preliminaries that will be used in the later sections. 

\subsection{Closed and Open Quantum System Dynamics}
Here, we review the basic concepts of closed and open quantum system dynamics. This section is adapted from \cite{Emzir2016a}.
For quantum systems, in contrast to classical systems where the state is determined by a set of scalar variables, the \emph{state} of the system is described by a vector in the system's Hilbert space $\mathscr{H}$ with unit norm. Furthermore, in quantum mechanics, physical quantities like the spin of atom, position, and momentum, are described as self-adjoint operators in a Hilbert space. These operators are called observables. An inner product gives the expected values of these quantities. For example, an observable $A$ and a unit vector $\ket{\psi} \in \mathscr{H}$ have lead to the expected value $\bra{\psi}A\ket{\psi}$.
\\
The dynamics of a closed quantum system are described by an observable called the \emph{Hamiltonian} $\mathbb{H}$ which acts on the unit vector $\ket{\psi} \in \mathscr{H}$, as per 
$
\dv{\ket{\psi_t}}{t} = -i \mathbb{H} \ket{\psi_t},
$
which is  known as the Schr\"{o}dinger equation. The evolution of the unit vector $\ket{\psi} \in \mathscr{H}$ can be described by a unitary operator $U_t$, where $\ket{\psi_t} = U_t \ket{\psi_0}$. Accordingly, the Schr\"{o}dinger equation can be rewritten as
\begin{dmath}
	\dv{U_t}{t} = -i \mathbb{H} U_t. \label{eq:ClosedUnitraryEvolution}
\end{dmath}
From this equation, any system observable $X$ will evolve according to $X_t = U_t^\ast X U_t$, satisfying
\begin{dmath}
	\dv{X_t}{t} = -i \commutator{X_t}{\mathbb{H}}, \label{eq:ClosedSystemHeisenberg}
\end{dmath}
which is called the Heisenberg equation of motion for the observable $X$.\\
An open quantum system is a quantum system which interacts with other quantum mechanical degrees of freedom.
An open quantum system $\mathcal{P}$ can be characterized by a triple $\left(\mathbb{S},\mathbb{L},\mathbb{H}\right)$, with  Hamiltonian $\mathbb{H}$, coupling operator $\mathbb{L}$ and  scattering matrix $\mathbb{S}$ which are operators on the \emph{system}'s Hilbert space $\mathscr{H}$. 
Let $\mathbf{A}_t = \qty[A_{1,t} \; \cdots \; A_{n,t}]$, be a vector of annihilation operators defined on distinct copies of the Fock space $\Gamma$ \cite{nurdin2014quantum}. 
For an open quantum system interacting with $n$ channels environmental fields, the total Hilbert space will be given as $\tilde{\mathscr{H}} = \mathscr{H} \otimes \Gamma_n$, where $\mathscr{H}$ is the system Hilbert space, and $\Gamma_n = \Gamma^{\otimes^n}$ is $n$ copies of the single channel Fock space $\Gamma$. 
Notice that in the linear span of coherent states, the Fock spaces $\Gamma_{i}$ $i=1,\cdots,n$ possesses a continuous tensor product. For any time interval $0\leq s < t $, the Fock space $\Gamma_i$ can be decomposed into  
$
	\Gamma_i = \Gamma_{i,s]}\otimes \Gamma_{i,[s,t]}\otimes \Gamma_{i,[t}. \label{eq:GammaDecompose}
$, \cite[pp. 179-180]{Parthasarathy1992}.
Therefore, we can write $\tilde{\mathscr{H}}_{,t]} \equiv \tilde{\mathscr{H}}_{\left[0,t\right]} = \mathscr{H} \otimes {\Gamma_n}_{\left[0,t\right]}$, and $\tilde{\mathscr{H}}_{[t} =   \Gamma_{n,[t}$.
Each annihilation operator $A_{i,t}$ represents a single channel of quantum noise input. ${\Lambda}$ is a scattering operator between channels.  Both $\mathbf{A}_t$ and $\mathbf{A}_t^\ast$ construct a quantum version of Brownian motion processes, while on the other hand ${\Lambda}$ can be thought as a quantum version of a Poissonian process \cite{Parthasarathy1992}.
In a similar way to the unitary operator evolution in the closed quantum system \eqref{eq:ClosedUnitraryEvolution}, we can also derive the unitary operator evolution for an open quantum system. In contrast to the closed quantum system unitary evolution \eqref{eq:ClosedUnitraryEvolution}, the interaction with the environment leads to randomness in the unitary evolution of an open quantum system $\mathcal{P}$ as follows \cite[Corollary 26.4]{Parthasarathy1992}:
\begin{dmath}
	dU_t =   \left[\text{tr}\left[\left(\mathbb{S} - \mathbf{I}\right)d{\Lambda}^\top_t\right] + d\mathbf{A}^{\dagger}_t \mathbb{L}  - \mathbb{L}^{\dagger}\mathbb{S} d\mathbf{A}_t - \left(\dfrac{1}{2}\mathbb{L}^{\dagger}\mathbb{L}+i\mathbb{H}\right)dt\right]U_t,{\quad U_0 = 1}. \label{eq:UnitaryEvolution}
\end{dmath}
In the context of open quantum system dynamics, any system observable $X$ will evolve according to
\begin{dmath}
	{X_t = j_t\qty(X) \equiv  U^{\dagger}_t\left(X \otimes 1 \right)U_t}, \label{eq:jt}
\end{dmath}
where $1$ is identity operator on $\Gamma_n$. Correspondingly, as an analog of \eqref{eq:ClosedSystemHeisenberg}, for an open quantum system, the corresponding Heisenberg equation of motion for a system operator $X$ is given by \cite{gough2009series},
\begin{dmath}
	dX_t = \mathcal{G}\qty(X_t) dt
	+ d\mathbf{A}^{\dagger}_t\mathbb{S}^{\dagger}_t\left[X_t,\mathbb{L}_t\right]^\top + \left[\mathbb{L}^{\dagger}_t,X_t\right]^\top\mathbb{S}_t d\mathbf{A}_t 
	+ \text{tr}\left[\left(\mathbb{S}^{\dagger}_t X_t \mathbb{S}_t - X_t\right)d{\Lambda}_t^{\top}\right], \label{eq:QSDE_X}
\end{dmath}
where all operators evolve according to  \eqref{eq:jt}; i.e. $\mathbb{L}_t = U^{\dagger}_t\left(\mathbb{L} \otimes 1 \right)U_t$, and $\mathcal{G}\qty(X_t)$ is the quantum Markovian generator for $X_t$ given by
\begin{dmath}
	\mathcal{G}\qty(X_t) = -i\left[X_t,\mathbb{H}_t\right] + \frac{1}{2} \mathbb{L}_t^{\dagger} \left[X_t,\mathbb{L}_t\right]^\top + \frac{1}{2} \left[\mathbb{L}_t^{\dagger},X_t\right]^\top\mathbb{L}_t. \label{eq:QuantumMarkovianGeneratorForX}
\end{dmath}
We call equation \eqref{eq:QSDE_X}  the {QSDE} for the system observable $X$. 

\section{Quantum Dynamical Semigroups and Their Convergences}\label{sec:PreliminaryLyapunovStability}
In this section, we will describe some preliminaries that will be used in the later sections. The following definitions are the basic notions in quantum probability and quantum dynamical semigroups (QDS); see \cite[Chapter 1]{davies1976quantum}. \\
We recall that a von Neumann algebra is a $\ast-$ subalgebra of $\mathfrak{B}\qty(\mathscr{H})$ which contains the identity $1$ and is closed in the normal topology.
\begin{definition}
	Let $\mathscr{A}$ be a von Neumann algebra. A linear functional $\varphi$ is called a state on $\mathscr{A}$, $\varphi : \mathscr{A} \rightarrow \mathbb{C}$ if it is positive i.e., $\varphi\qty(A^\ast A) \geq 0, \forall A \in \mathscr{A}$, and normal; i.e., $\varphi\qty(1) = 1$. 
\end{definition}
Any positive linear functional $\omega$ on $\mathscr{A}$, is called normal if $\sup_n \omega\qty(X_n) =  \omega\qty(\sup_n X_n)$, where $\qty{X_n}$ is an upper bounded increasing net of self adjoint operators.
Notice that a linear functional $\omega$ is  normal if there is a unity trace operator $\rho$ such that $\omega(X) = \trace\qty(\rho X)$. From this viewpoint, a unit element $\ket{u} \in \mathscr{H}$ and a density operator $\rho$ can also be considered as states on $\mathfrak{B}\qty(\mathscr{H})$, by considering the following linear functionals, $\varphi\qty(X) =  \bra{\psi}X\ket{\psi}$, and $\varphi\qty(X) = \trace\qty(\rho X)$.
\\
If $\rho \in \mathfrak{S}\qty(\mathscr{H})$ is the initial density operator of the system and $\Psi \in  \mathfrak{S}\qty(\Gamma_n)$ is the initial density operator of the environment, then for any bounded system observable $X$, the quantum expectation of $X_t$ in \eqref{eq:jt} is given by $\trace{ \qty(X_t \qty(\rho \otimes \Psi))} \equiv \Paverage{X_t}$. Let $\Psi_{[t} \in  \mathfrak{S}(\tilde{\mathscr{H}}_{[t})$ be a density operator on $\tilde{\mathscr{H}}_{[t}$. We can define $\rhoConditional{t]}{\cdot} : \mathfrak{B}\qty(\tilde{\mathscr{H}}) \rightarrow \mathfrak{B}(\tilde{\mathscr{H}}_{t]})$ as follows:
\begin{equation}
\rhoConditional{t]}{Z} \otimes 1 \equiv \condExpectA{Z}{\mathfrak{B}\qty(\tilde{\mathscr{H}}_{t]})\otimes 1 } {, \forall Z \in \mathfrak{B}\qty(\tilde{\mathscr{H}})},
\label{eq:PartialTraceExpect}
\end{equation}
where $1$ is identity operator on $\tilde{\mathscr{H}}_{[t}$ and $ \condExpectA{Z}{\mathfrak{B}\qty(\tilde{\mathscr{H}}_{t]})\otimes 1 }$ is a quantum conditional expectation; see also \cite[Proposition 16.6, Excercise 16.10, 16.11]{Parthasarathy1992} and \cite[Example 1.3]{fagnola1999quantum} for the existence of $\condExpectA{Z}{\mathfrak{B}\qty(\tilde{\mathscr{H}}_{t]})\otimes 1 }$. In our case, we will frequently consider $\rhoConditional{0]}{j_t\qty(X)}$, when the quantum expectation of $j_t\qty(X)$ is marginalized with respect to the system Hilbert space $\mathscr{H}$. 
\begin{definition}\cite{fagnola1999quantum}
	A {QDS} on a von Neumann algebra $\mathscr{A}$ is a family of bounded linear maps $\qty{\mathcal{T}_t, t\geq 0}$ with the following properties
	\begin{enumerate}
		\item $\mathcal{T}_0\qty(A) = A$, for all $A \in \mathscr{A}$.
		\item $\mathcal{T}_{s+t}\qty(A) =  \mathcal{T}_{s}\qty(\mathcal{T}_{t}\qty(A))$, for all $s,t \geq 0$, and $A \in \mathscr{A}$.
		\item $\mathcal{T}_{t}$ is a completely positive mapping for all $t\geq 0$.
		\item $\mathcal{T}_{t}$ is normally continuous for all $t\geq 0$.
	\end{enumerate} 
\end{definition}
Using the conditional expectation in \eqref{eq:PartialTraceExpect}, we observe that there exists a one-parameter semigroup $\mathcal{T}_t : \mathfrak{B}\qty(\mathscr{H}) \rightarrow \mathfrak{B}\qty(\mathscr{H})$ given by 
$
{\mathcal{T}_t\qty(X) = \rhoConditional{0]}{j_t\qty(X)} }.
$
The generator of this semigroup $\mathcal{L}\qty(X) : \mathscr{D}\qty(\mathcal{L}) \rightarrow \mathfrak{B}\qty(\mathscr{H})$ is given by
\begin{dmath}
	{\mathcal{L}\qty(X) = \lim\limits_{t \downarrow 0} \dfrac{\mathcal{T}_t\qty(X) - X}{t} \quad \forall X \in \mathscr{D}\qty(\mathcal{L})},
\end{dmath}
By using the quantum conditional expectation property $\varphi\qty(A) = \varphi\qty(\condExpectA{A}{\mathscr{B}}), \forall A \in \mathscr{A}$ and \eqref{eq:PartialTraceExpect}, we obtain $\Paverage{\mathcal{T}_t\qty(X)\otimes 1} = \Paverage{X_t}$. Therefore,
$
{\mathcal{T}_t\qty(\mathcal{L}\qty(X)) = \mathcal{L}\qty(\mathcal{T}_t\qty(X)) = \rhoConditional{0]}{\mathcal{G}\qty(X_t)}}.
$
Note that there also exists a one-parameter semigroup $\mathcal{S}_t$ such that, $\Paverage{X_t} = \Paverage{\mathcal{T}_t\qty(X) \otimes 1} = \trace\qty(X \mathcal{S}_t\qty(\rho))$. Explicitly, it can be defined as
$
\mathcal{S}_t\qty(\rho) \equiv \trace_{\Gamma_n}\qty(U_t(\rho \otimes \Psi)U_t^\dagger),
$
where $\trace_{\Gamma_n}\qty(\cdot)$ is the partial trace operation over $\Gamma_n$. Let us write $\rho_t \equiv \mathcal{S}_t\qty(\rho)$. The generator of this semigroup is the master equation \cite{fagnola1999quantum}
\begin{equation}
\mathcal{L}_{\ast}\qty(\rho_t) \equiv
-i\left[\mathbb{H},\rho_t\right] + \mathbb{L}^{\top}\rho_t\mathbb{L}^{\ast} - \frac{1}{2}\mathbb{L}^{\dagger}\mathbb{L}\rho_t - \frac{1}{2}\rho_t\mathbb{L}^{\dagger}\mathbb{L}. \label{eq:MasterEquation}
\end{equation}
We restrict our discussion to the case where both of the semigroups $\mathcal{T}_t$ and $\mathcal{S}_t$ are uniformly continuous. For our development, we will also require the following definitions \cite{Fagnola2001}:
\begin{definition}
	A density operator $\rho$ is called invariant for a {QDS} $\mathcal{T}_t$ if for all $A \in \mathfrak{B}\qty(\mathscr{H})$, $\trace\qty(\rho \mathcal{T}_t\qty(A))  = \trace\qty(\rho A)$.
\end{definition}
\begin{definition}\label{def:WeakConvergence}
	A sequence of density operators $\qty{\rho_n}$ is said to converge \emph{weakly} to $\rho \in \mathfrak{S}\qty(\mathscr{H})$ if for all $A \in \mathfrak{B}\qty(\mathscr{H})$,
$
	\lim\limits_{n\rightarrow \infty} \trace\qty(\rho_n A) = \trace\qty(\rho A). 
$
\end{definition}
We write the limit of a weakly converge sequence $\qty{\rho_n}$ as $w-\lim\limits_{n\rightarrow \infty} \rho_n$ and $\rho_n \xrightarrow{w} \rho$. 
We recall that the set of trace-class Hilbert space operators $\mathfrak{I}_1\qty(\mathscr{H})$ with norm $\norm{\cdot}_1$ is a Banach space \cite[Prop 9.13]{Parthasarathy1992}.
Using the metric induced by the norm $d\qty(\rho_A,\rho_B) \equiv \norm{\rho_A-\rho_B}_1$, we refer a closed ball with center $\rho_{\ast}$ and radius $\epsilon$ to the set
\begin{dmath}
	\mathscr{B}_{\epsilon}\qty(\rho_{\ast}) = \qty{\rho \in \mathfrak{S}\qty(\mathscr{H}) : \norm{\rho-\rho_{\ast}}_1\leq \epsilon}. \label{eq:BallCenterAtRhoStarRadiusEpsilon}
\end{dmath}
The normalized version of the distance $d\qty(\rho_A,\rho_B)$ is also known as the Kolmogorov distance in quantum information community \cite{Fuchs1999,MichaelA.Nielsen2001,IngemarBengtsson2017}.We will also refer neighborhood $\mathscr{N}$ of $\rho_\ast$ to a union of balls \eqref{eq:BallCenterAtRhoStarRadiusEpsilon} with various center points and $\mathscr{B}_{\epsilon}\qty(\rho_{\ast}) \subseteq \mathscr{N}$ for some $\epsilon$.
The following proposition shows a basic fact regarding the completeness of the class of density operators under $\norm{\cdot}_1$.
\begin{proposition}\label{prp:SHisCompleteSubspaceOfI1}
	The class of density operators on the Hilbert space $\mathscr{H}$, $\mathfrak{S}\qty(\mathscr{H})$ is a closed subset of the Banach space $\qty(\mathfrak{I}_1\qty(\mathscr{H}),\norm{\cdot}_1)$.
\end{proposition}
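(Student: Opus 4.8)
The plan is to argue directly from the definition of closedness. Let $\qty{\rho_n} \subseteq \mathfrak{S}\qty(\mathscr{H})$ be an arbitrary sequence with $\norm{\rho_n - \rho}_1 \to 0$ for some $\rho \in \mathfrak{I}_1\qty(\mathscr{H})$; since $\qty(\mathfrak{I}_1\qty(\mathscr{H}),\norm{\cdot}_1)$ is already known to be a Banach space, all that remains is to show that the limit $\rho$ is itself a density operator, i.e. that $\rho = \rho^\ast$, $\rho \geq 0$, and $\trace\qty(\rho) = 1$.

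The two elementary estimates that do all the work are, for any trace-class operator $X$, the bounds $\abs{\trace\qty(X)} \leq \norm{X}_1$ and $\norm{X}_\infty \leq \norm{X}_1$ (both immediate from the singular-value representation of trace-class operators). The first bound makes the trace functional $\norm{\cdot}_1$-continuous, so $\trace\qty(\rho) = \lim_n \trace\qty(\rho_n) = 1$. For positivity I would use the second bound: for every unit vector $\ket{\psi}\in\mathscr{H}$,
\[
\abs{\bra{\psi}\qty(\rho_n - \rho)\ket{\psi}} \leq \norm{\rho_n - \rho}_\infty \leq \norm{\rho_n - \rho}_1 \longrightarrow 0,
\]
hence $\bra{\psi}\rho\ket{\psi} = \lim_n \bra{\psi}\rho_n\ket{\psi} \geq 0$. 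Since this holds for all $\ket{\psi}$, the numerical range of $\rho$ is contained in $[0,\infty)$; over a complex Hilbert space this forces $\rho = \rho^\ast$ and $\rho \geq 0$. Combining the three properties gives $\rho \in \mathfrak{S}\qty(\mathscr{H})$, which is exactly the assertion.

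The step I expect to be the only one requiring any care is the preservation of positivity under the $\norm{\cdot}_1$-limit — equivalently, the $\norm{\cdot}_1$-closedness of the positive cone — which I reduce to the norm comparison $\norm{X}_\infty \leq \norm{X}_1$; I would either prove this comparison in a line from the definition $\norm{X}_1 = \trace\qty(\abs{X})$ or simply cite it, since the operator norm is the largest singular value and the trace norm is their sum. Everything else is a routine continuity argument, so there is no genuine obstacle here, only the bookkeeping of checking the three defining conditions of a density operator on the limit.
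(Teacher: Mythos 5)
Your proof is correct and follows essentially the same route as the paper's: both reduce closedness to checking that a $\norm{\cdot}_1$-limit of density operators is again positive with unit trace, and both hinge on the comparison $\norm{X}_\infty \leq \norm{X}_1$ to transfer positivity to the limit. Your verification of the two properties is in fact a little more direct than the paper's (trace continuity via $\abs{\trace\qty(X)}\leq\norm{X}_1$ instead of a triangle-inequality argument for $\norm{\rho_\ast}_1=1$, and a numerical-range argument instead of a proof by contradiction), but the substance is the same.
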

\begin{proof}
	First we recall that a subset of a complete metric space is closed if and only if it is complete \cite[Prop 6.3.13]{Morris2016}. Therefore, we need to show that every Cauchy sequence of density operators $\qty{\rho_n}$ converges to a density operator $\rho_\ast$. Since $\mathfrak{S}\qty(\mathscr{H}) \subseteq \mathfrak{I}_1\qty(\mathscr{H})$, which is a Banach space with respect to the norm $\norm{\cdot}_1$, then $\qty{\rho_n}$ converges to an element in $\mathfrak{I}_1\qty(\mathscr{H})$; i.e., $\rho_\ast \in \mathfrak{I}_1\qty(\mathscr{H})$. Therefore according to the definition of the class of density operators, it remains to show that $\rho_\ast $ is positive and has unity trace. The limit $\rho_\ast$ is positive, since if it is non-positive, then there exists $\epsilon >0$ such that for all $n$, $0 < \epsilon < \norm{ \rho_n - \rho_\ast}_\infty$. However, as $n \rightarrow \infty$,
	\begin{align*}
	0 < \epsilon < \norm{ \rho_n - \rho_\ast}_\infty \leq  \norm{ \rho_n - \rho_\ast}_1 = 0,
	\end{align*}
	which is a contradiction. 
	The limit $\rho_\ast$ also has unit trace by the following argument. Since $\rho_n$ converge to $\rho_\ast$, then for any $\epsilon >0$ there is $n$ such that 
	\begin{align*}
	1 = \norm{ \rho_n }_1 \leq \norm{ \rho_n - \rho_\ast}_1 + \norm{ \rho_\ast}_1 \leq \epsilon + \norm{\rho_\ast}_1.
	\end{align*}
	However, we notice also that for any $\epsilon >0$, there is an $N_\epsilon \in \mathbb{N}$, such that for every $n,m \geq N_\epsilon$, $\norm{\rho_n - \rho_m}_1 < \epsilon$. 
	Fix $n$. Then we have $\norm{\rho_n}_1 \leq \norm{\rho_n - \rho_{N_\epsilon}}_1 +\norm{\rho_{N_\epsilon}}_1$. Taking the limit as $n$ approaches infinity, we obtain 
	\begin{dmath*}
		\norm{\rho_{\ast}}_1 \leq  \lim\limits_{n\rightarrow \infty} \norm{\rho_n}_1 + \norm{\rho_n - \rho_{\ast}}_1 
		\leq  \lim\limits_{n\rightarrow \infty} \norm{\rho_n - \rho_{N_\epsilon}}_1 +\norm{\rho_{N_\epsilon}}_1 + \norm{\rho_n - \rho_{\ast}}_1 
		\leq \epsilon + 1.
	\end{dmath*}
	Since $\epsilon$ can be chosen arbitrarily, then $\norm{\rho_{\ast}}_1 = 1$. Therefore, $\rho_{\ast}$ is indeed a density operator.
\end{proof}
\section{Lyapunov Stability Criterion for The Invariance Set of Density Operators}
In this section, we will introduce a Lyapunov stability notion for the set of system invariant density operators. Before we define the stability condition in the following proposition, we first show that the set of invariant density operators of the {QDS} $\mathcal{T}_s$ is both closed and convex. 
\begin{proposition}\label{prp:InvariantDensityOperatorConvexClosed}
	The set of invariant density operators $\mathscr{C}_\ast$ is convex and closed in $\qty(\mathfrak{I}_1\qty(\mathscr{H}),\norm{\cdot}_1)$.
\end{proposition}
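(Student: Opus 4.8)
The plan is to treat convexity and closedness separately, reducing each to linearity and $\norm{\cdot}_1$-continuity of the trace functional together with Proposition~\ref{prp:SHisCompleteSubspaceOfI1}.

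\emph{Convexity.} Take $\rho_A,\rho_B \in \mathscr{C}_\ast$ and $\lambda \in [0,1]$, and put $\rho = \lambda\rho_A + (1-\lambda)\rho_B$. First $\rho \in \mathfrak{S}\qty(\mathscr{H})$: positivity is preserved under convex combinations of positive operators, and $\trace\qty(\rho) = \lambda\trace\qty(\rho_A) + (1-\lambda)\trace\qty(\rho_B) = 1$ by linearity of the trace, so $\mathfrak{S}\qty(\mathscr{H})$ is convex. It then remains to check invariance: for every $A \in \mathfrak{B}\qty(\mathscr{H})$ and $t\geq 0$, linearity of the trace and invariance of $\rho_A$ and $\rho_B$ give $\trace\qty(\rho\,\mathcal{T}_t\qty(A)) = \lambda\trace\qty(\rho_A\,\mathcal{T}_t\qty(A)) + (1-\lambda)\trace\qty(\rho_B\,\mathcal{T}_t\qty(A)) = \lambda\trace\qty(\rho_A A) + (1-\lambda)\trace\qty(\rho_B A) = \trace\qty(\rho A)$, hence $\rho\in\mathscr{C}_\ast$.

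\emph{Closedness.} Let $\qty{\rho_n}\subseteq \mathscr{C}_\ast$ with $\norm{\rho_n-\rho_\ast}_1\to 0$. By Proposition~\ref{prp:SHisCompleteSubspaceOfI1} the limit $\rho_\ast$ lies in $\mathfrak{S}\qty(\mathscr{H})$, so it suffices to show $\rho_\ast$ is invariant. Fix $A\in\mathfrak{B}\qty(\mathscr{H})$ and $t\geq 0$; inserting the invariant $\rho_n$ and using the triangle inequality,
\begin{align*}
\abs{\trace\qty(\rho_\ast\,\mathcal{T}_t\qty(A)) - \trace\qty(\rho_\ast A)}
&\leq \abs{\trace\qty(\qty(\rho_\ast-\rho_n)\,\mathcal{T}_t\qty(A))} + \abs{\trace\qty(\qty(\rho_n-\rho_\ast)\,A)},
\end{align*}
the remaining term $\trace\qty(\rho_n\,\mathcal{T}_t\qty(A)) - \trace\qty(\rho_n A)$ vanishing by invariance of $\rho_n$. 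Each surviving term is estimated by the trace-norm/operator-norm duality $\abs{\trace\qty(XY)}\leq\norm{X}_1\norm{Y}_\infty$ together with boundedness of $\mathcal{T}_t$ (part of the {QDS} definition), giving the bound $\qty(\norm{\mathcal{T}_t}+1)\norm{A}_\infty\norm{\rho_n-\rho_\ast}_1\to 0$. Hence $\trace\qty(\rho_\ast\,\mathcal{T}_t\qty(A)) = \trace\qty(\rho_\ast A)$ for all $A$ and $t$, i.e. $\rho_\ast\in\mathscr{C}_\ast$.

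The convexity half is entirely routine, and the closedness half is too, but the one place where organization matters is the splitting above: the functional $X\mapsto\trace\qty(\rho X)$ on $\mathfrak{B}\qty(\mathscr{H})$ is not continuous in a way that would let one pass a limit directly through $\mathcal{T}_t$, so one must arrange the estimate so that the quantity varying with $n$ is always a \emph{density operator} measured in $\norm{\cdot}_1$ paired against a \emph{fixed} bounded observable; then trace duality closes the argument. If a bound uniform in $t$ were desired (it is not needed here, since $t$ is held fixed), one could further note that each $\mathcal{T}_t$ is completely positive and unital, hence a $\norm{\cdot}_\infty$-contraction, so $\norm{\mathcal{T}_t\qty(A)}_\infty\leq\norm{A}_\infty$.
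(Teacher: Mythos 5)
Your proof is correct, and the closedness half takes a genuinely different (and more elementary) route than the paper. The paper first invokes the fact that for a convex subset of a Banach space, norm-closedness is equivalent to closedness in the weak topology $\sigma\qty(\mathfrak{I}_1\qty(\mathscr{H}),\mathfrak{B}\qty(\mathscr{H}))$, and then verifies weak closedness for an arbitrary weakly convergent net $\qty{\rho_n}\subseteq\mathscr{C}_\ast$, using the same three-term splitting you use together with the duality $\trace\qty(\mathcal{S}_t\qty(\sigma)A)=\trace\qty(\sigma\,\mathcal{T}_t\qty(A))$; the terms vanish because weak convergence is tested against the fixed observable $\mathcal{T}_t\qty(A)-A$. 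You instead prove sequential norm-closedness directly, estimating the same two surviving terms by $\abs{\trace\qty(XY)}\leq\norm{X}_1\norm{Y}_\infty$. Since the proposition only claims closedness in $\norm{\cdot}_1$ and that topology is metric, your sequential argument fully suffices, and it avoids the appeal to the Hahn--Banach/Mazur-type theorem; what the paper's version buys is the strictly stronger conclusion that $\mathscr{C}_\ast$ is \emph{weakly} closed, which is the kind of fact one wants if weak compactness of $\mathscr{C}_\ast$ is needed later (and note that weak closedness implies norm closedness outright, so the equivalence for convex sets is not actually needed in the paper's direction of argument). Your convexity argument coincides with the paper's, with the minor bonus that you explicitly verify that the convex combination is again a density operator, a step the paper leaves implicit.
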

\begin{proof}
	The convexity of $\mathscr{C}_\ast$ follows directly from the fact that for any $\rho_1,\rho_2 \in \mathscr{C}_\ast$, then for any $\lambda \in \qty[0,1]$, $\trace\qty(\qty(\mathcal{S}_t\qty(\lambda \rho_1 + (1-\lambda)\rho_2) - \qty(\lambda \rho_1 + (1-\lambda)\rho_2))A) =  0$, for all $A \in \mathfrak{B}\qty(\mathscr{H})$, and $t \geq 0$.
	Notice that since $\mathscr{C}_\ast$  is convex, the closedness of  $\mathscr{C}_\ast$ on $\qty(\mathfrak{I}_1\qty(\mathscr{H}),\norm{\cdot}_1)$ is equivalent to the closedness of $\mathscr{C}_\ast$  in the weak topology  \cite[Thm III.1.4 ]{Conway1985}.
	To show that $\mathscr{C}_\ast$ is closed in the weak topology, suppose that $\qty{\rho_n}$ is a net in $\mathscr{C}_\ast$ that converges weakly to $\rho_\ast$, $\rho_n \xrightarrow{w} \rho_\ast$. Then we have to show that $\rho_\ast \in \mathscr{C}_\ast$.  We observe that, the linearity of the semi-group $\mathcal{S}_t$ implies that for all $A \in \mathfrak{B}\qty(\mathscr{H})$, $\rho_n$ in the net $\qty{\rho_n}$, and $t\geq 0$
	\begin{dmath*}
		\trace\qty(\qty(\mathcal{S}_t\qty(\rho_\ast)-\rho_\ast)A) = \trace\qty(\qty(\mathcal{S}_t\qty(\rho_\ast - \rho_n))A) + \trace\qty(\qty(\mathcal{S}_t\qty(\rho_n) - \rho_n)A) + \trace\qty(\qty(\rho_n - \rho_\ast)A)\\ = \trace\qty(\qty(\mathcal{S}_t\qty(\rho_\ast - \rho_n))A)  + \trace\qty(\qty(\rho_n - \rho_\ast)A) = 
		\trace\qty(\qty(\rho_\ast - \rho_n)\qty(\mathcal{T}_t\qty(A) - A)).
	\end{dmath*}
	Since $\mathcal{T}_t\qty(A) \in \mathfrak{B}\qty(\mathscr{H})$, then for any $\epsilon >0$, there exists a $\rho_m \in \qty{\rho_n}$ such that $\abs{\trace((\rho_\ast - \rho_m)(\mathcal{T}_t\qty(A) - A))} < \epsilon$.
	However,  $\epsilon>0$ can be selected arbitrarily , therefore, $\rho_\ast  \in \mathscr{C}_\ast$.
\end{proof}
\begin{remark}
	The last proposition implies that in any quantum system, it is impossible to have multiple isolated invariant density operators, even for the case of finite dimensional quantum systems. This phenomenon is unique to quantum systems since classical dynamics can have multiple isolated equilibrium points; see for example \cite[\textsection 2.2 ]{khalil2002nonlinear}. The convexity of $\mathscr{C}_\ast$ has also been derived in \cite{Schirmer2010} for the finite dimensional case. 
\end{remark}
In what follows, we will examine the convergence to the set of invariant density operators in a Banach space $\qty(\mathfrak{I}_1\qty(\mathscr{H}),\norm{\cdot}_1)$. The distance between a point $\sigma \in \mathfrak{S}\qty(\mathscr{H})$ and the closed convex set $\mathscr{C}_\ast \subseteq \mathfrak{S}\qty(\mathscr{H})$ can  be naturally defined by
\begin{dmath}
	d\qty(\sigma,\mathscr{C}_\ast) = \inf_{\rho \in \mathscr{C}_\ast} \norm{\sigma-\rho}_1. \label{eq:dtoC_ast}
\end{dmath}
We define the following stability notions:
\begin{definition}\label{def:StabilityOfDensityOperator}
	Let $\mathscr{C}_{\ast} \subset \mathfrak{S}\qty(\mathscr{H}) $ be a convex set of invariant density operators of a quantum system $\mathcal{P}$. Suppose that $\mathscr{N} \subset \mathfrak{S}\qty(\mathscr{H})$, where $\mathscr{C}_{\ast}$ is a strict subset of $\mathscr{N}$, and the system is initially at density operator $\rho \in \mathscr{N}$. Then, we say the closed convex set of invariant density operators $\mathscr{C}_{\ast}$ is,
	\begin{enumerate}
		\item Lyapunov stable if for every $\varepsilon >0$,  there exists $\delta(\varepsilon) >0$ such that $d\qty(\rho,\mathscr{C}_{\ast}) < \delta(\varepsilon)$ implies $d\qty(\rho_t,\mathscr{C}_{\ast}) < \varepsilon$ for all $t\geq 0$.
		\item Locally asymptotically stable, if it is Lyapunov stable, and there exists $\delta >0$, such that $d\qty(\rho,\mathscr{C}_{\ast}) < \delta$ implies $\lim\limits_{t\rightarrow\infty} d\qty(\rho_t,\mathscr{C}_{\ast}) = 0$.
		\item Locally exponentially stable, if there exists $\beta,\gamma,\delta >0$  such that $d\qty(\rho,\mathscr{C}_{\ast}) < \delta$ implies, $d\qty(\rho_t,\mathscr{C}_{\ast}) \leq \beta d\qty(\rho,\mathscr{C}_{\ast}) \exp\qty(-\gamma t)$ for all $t\geq 0$.
	\end{enumerate}
	If $\mathscr{N} = \mathfrak{S}\qty(\mathscr{H})$, such that $\delta$ can be chosen arbitrarily in 2) and 3), we say $\mathscr{C}_{\ast}$ is a globally asymptotically, or exponentially stable respectively.
\end{definition}
Before we prove the main result, we need to establish the following facts; see \cite{Emzir2017b} for the proof.
\begin{lemma}\cite{Emzir2017b}\label{lem:LowerBound}
	Suppose there exists a self-adjoint operator $A \in \mathfrak{B}\qty(\mathscr{H})$  where spectrum of $A$ is non decreasing such that for a closed convex set of density operators $\mathscr{C}_\ast$, and a neighborhood $\mathscr{N}$ of $\mathscr{C}_\ast$, 
	\begin{align}
	\inf_{\rho_\ast \in \mathscr{C}_\ast}\trace\qty(A\qty(\rho - \rho_{\ast})) > 0, \; \forall \rho \in \mathscr{N}\backslash \mathscr{C}_\ast. \label{eq:StrictMinimaOnNeighbourhood}
	\end{align} 
	Then there exists $\kappa>0$ such that $\kappa d\qty(\rho,\mathscr{C}_\ast)^2 \leq \inf_{\rho_\ast \in \mathscr{C}_\ast}\trace\qty(A\qty(\rho - \rho_{\ast}))$, for all $\rho \in \mathscr{N} \backslash \mathscr{C}_\ast$.
\end{lemma}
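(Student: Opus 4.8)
The plan is to exploit the finite‑dimensional (or, more generally, the self‑adjoint‑operator) structure of $A$ together with the geometry of the closed convex set $\mathscr{C}_\ast$. First I would fix $\rho \in \mathscr{N}\backslash\mathscr{C}_\ast$ and let $\rho_\ast^\rho \in \mathscr{C}_\ast$ be the (unique, by convexity and closedness in the Hilbert–Schmidt sense, or at least a minimizing) element achieving $d\qty(\rho,\mathscr{C}_\ast) = \norm{\rho - \rho_\ast^\rho}_1$; existence of such a projection follows because $\mathscr{C}_\ast$ is closed and convex in the reflexive/uniformly convex setting, or simply because in finite dimensions the infimum in \eqref{eq:dtoC_ast} is attained. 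Writing $\Delta \equiv \rho - \rho_\ast^\rho$, note $\trace\qty(\Delta) = 0$ and $\norm{\Delta}_1 = d\qty(\rho,\mathscr{C}_\ast)$, so the claim reduces to showing $\trace\qty(A\Delta) \geq \kappa \norm{\Delta}_1^2$ uniformly over the relevant set of $\Delta$.

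The key step is to linearize the quadratic lower bound. Because the spectrum of $A$ is non‑decreasing and \eqref{eq:StrictMinimaOnNeighbourhood} says $\inf_{\rho_\ast}\trace\qty(A(\rho-\rho_\ast)) > 0$ strictly on $\mathscr{N}\backslash\mathscr{C}_\ast$, I would first establish a linear bound of the form $\trace\qty(A\Delta) \geq c\,\norm{\Delta}_1$ for directions $\Delta$ that point "away" from $\mathscr{C}_\ast$, by a compactness argument: restrict to the unit sphere $\norm{\Delta}_1 = 1$ intersected with the cone of admissible perturbations $\{\rho - \rho_\ast : \rho\in\mathscr{N}, \rho_\ast\in\mathscr{C}_\ast\}$, which is compact in finite dimensions, and on which $\trace\qty(A\Delta)$ is continuous and strictly positive, hence bounded below by some $c>0$. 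Then, since along the segment from $\rho_\ast^\rho$ towards $\rho$ the distance to $\mathscr{C}_\ast$ scales linearly while $\trace\qty(A\Delta)$ also scales linearly, one gets $\trace\qty(A\Delta) \geq c\,d\qty(\rho,\mathscr{C}_\ast)$ when $d\qty(\rho,\mathscr{C}_\ast) \leq 1$; combining $d \leq 1$ with $d \geq d^2$ yields $\trace\qty(A\Delta) \geq c\,d^2$, and one shrinks $\mathscr{N}$ if necessary so that $d\qty(\rho,\mathscr{C}_\ast) \leq 1$ throughout, absorbing constants into $\kappa$.

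The main obstacle I anticipate is handling the infinite‑dimensional case cleanly, where the closed convex set need not be compact and the minimizing $\rho_\ast^\rho$ need not exist in $\norm{\cdot}_1$; there one must either use the reflexivity/weak compactness already invoked in Proposition~\ref{prp:InvariantDensityOperatorConvexClosed} (closedness in the weak topology) to extract minimizers along weakly convergent nets, or instead work with an approximate minimizer and control the error, and then replace the naive compactness argument for the linear bound $\trace\qty(A\Delta)\geq c\norm{\Delta}_1$ by a direct estimate using the non‑decreasing spectral structure of $A$ — writing $A$ in its spectral decomposition and bounding $\trace\qty(A\Delta)$ from below in terms of how much mass of $\Delta$ lies in the higher part of the spectrum. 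A secondary subtlety is verifying that the constant $\kappa$ can indeed be taken uniform over all of $\mathscr{N}\backslash\mathscr{C}_\ast$ rather than depending on $\rho$; this again comes down to the compactness/uniform‑continuity argument and to the fact that the strict inequality in \eqref{eq:StrictMinimaOnNeighbourhood} holds on the whole neighborhood. Since the excerpt cites \cite{Emzir2017b} for the proof, I would present the finite‑dimensional compactness argument in full and remark that the general case follows by the weak‑topology techniques of Proposition~\ref{prp:InvariantDensityOperatorConvexClosed}.
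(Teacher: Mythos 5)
The central step of your argument --- a linear lower bound $\trace\qty(A\Delta)\geq c\norm{\Delta}_1$ obtained by compactness on the unit sphere of ``admissible directions,'' followed by a homogeneity/scaling argument --- does not work, and the failure is not an infinite-dimensional technicality: it already occurs for a qubit. Take $\mathscr{H}=\mathbb{C}^2$, $A=\ket{1}\bra{1}$ (spectrum $\{0,1\}$), $\mathscr{C}_\ast=\qty{\ket{0}\bra{0}}$, and let $\rho_\epsilon$ be the pure state on $\sqrt{1-\epsilon}\ket{0}+\sqrt{\epsilon}\ket{1}$. Then \eqref{eq:StrictMinimaOnNeighbourhood} holds, $\trace\qty(A\qty(\rho_\epsilon-\rho_\ast))=\epsilon$, but $\norm{\rho_\epsilon-\rho_\ast}_1=2\sqrt{\epsilon}$, so $\trace\qty(A\Delta)/\norm{\Delta}_1=\sqrt{\epsilon}/2\to 0$: no uniform linear bound exists. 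This is precisely why the lemma asserts a \emph{quadratic} bound (which does hold here, with $\kappa=1/4$); the exponent $2$ is not an artifact you can recover from a linear estimate via $d\geq d^2$, it is forced by the geometry. Your compactness argument fails because the set of normalized differences $\qty(\rho-\rho_\ast)/\norm{\rho-\rho_\ast}_1$ is not closed --- in the example its closure contains the purely off-diagonal direction $\frac{1}{2}\qty(\ket{0}\bra{1}+\ket{1}\bra{0})$, on which $\trace\qty(A\,\cdot)=0$ --- so the infimum of the continuous function over the closure is $0$, not a positive $c$. The scaling step fails for the same underlying reason: the set $\qty{\rho-\rho_\ast : \rho\in\mathfrak{S}\qty(\mathscr{H})}$ is not a cone, so $\Delta/\norm{\Delta}_1$ is generally not an admissible difference (rescaling the perturbation above by $1/(2\sqrt{\epsilon})$ destroys positivity of $\rho_\ast+\Delta/\norm{\Delta}_1$), and you cannot transport the unit-sphere bound down to small $\Delta$ linearly.

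A correct proof has to exploit the positivity of $\rho$ itself, not just the linearity of $\rho\mapsto\trace\qty(A\rho)$: near the boundary of the state space (e.g., near a pure invariant state) a perturbation of trace norm $t$ can only move the expectation of $A$ by order $t^2$, and the quadratic bound is obtained from inequalities of Fuchs--van de Graaf type relating $\norm{\rho-\rho_\ast}_1^2$ to linear functionals such as $1-\bra{0}\rho\ket{0}\leq\trace\qty(A\rho)$ in the example above, combined with the spectral decomposition of $A$ and the convexity/closedness of $\mathscr{C}_\ast$. Your secondary concerns (existence of a minimizer $\rho_\ast^{\rho}$ in the non-reflexive space $\mathfrak{I}_1\qty(\mathscr{H})$, uniformity of $\kappa$ over $\mathscr{N}$) are legitimate but moot until the main mechanism is fixed; as written, the argument proves a statement that is false. (Note the paper itself defers the proof to the cited reference, so there is no in-text proof to compare against, but the gap above is decisive on its own.)
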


\begin{lemma}\cite{Emzir2017b} \label{lem:WeakConverge}
	Let $\mathscr{C}_\ast$ be a closed convex set of invariant density operators and $\mathscr{N}$ be a neighborhood of $\mathscr{C}_\ast$. Suppose $\qty{\rho_n}$ is a sequence of density operators in $\mathscr{N} \backslash \mathscr{C}_\ast$. If there exists a self-adjoint operator $A \in \mathfrak{B}\qty(\mathscr{H})$ satisfying condition in Lemma \ref{lem:LowerBound} and $\lim\limits_{n\rightarrow \infty}\trace\qty(A\qty(\rho_n - \rho_\ast) ) = 0$ for a $\rho_\ast \in \mathscr{C}_\ast$, then  $\lim\limits_{n\rightarrow \infty} d\qty(\rho_n,\mathscr{C}_\ast) = 0$.
\end{lemma}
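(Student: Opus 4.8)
The plan is to obtain the conclusion directly from Lemma~\ref{lem:LowerBound} together with the stated hypothesis, via a squeeze argument. Since the self-adjoint operator $A$ satisfies the hypotheses of Lemma~\ref{lem:LowerBound}, there exists $\kappa > 0$ such that
\begin{align*}
\kappa\, d\qty(\rho,\mathscr{C}_\ast)^2 \leq \inf_{\sigma \in \mathscr{C}_\ast} \trace\qty(A\qty(\rho - \sigma))
\end{align*}
for every $\rho \in \mathscr{N}\backslash\mathscr{C}_\ast$. I would apply this inequality to each term $\rho_n$ of the sequence, which is legitimate precisely because the sequence $\qty{\rho_n}$ lies in $\mathscr{N}\backslash\mathscr{C}_\ast$ by assumption.

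Next I would observe that, since $\rho_\ast$ is itself an element of $\mathscr{C}_\ast$, it is one of the competitors in the infimum, so $\inf_{\sigma\in\mathscr{C}_\ast}\trace\qty(A\qty(\rho_n-\sigma)) \leq \trace\qty(A\qty(\rho_n-\rho_\ast))$ for every $n$. Combining this with the previous inequality yields the chain
\begin{align*}
0 \leq \kappa\, d\qty(\rho_n,\mathscr{C}_\ast)^2 \leq \trace\qty(A\qty(\rho_n-\rho_\ast)),
\end{align*}
where the leftmost inequality holds because $d(\cdot,\cdot)\geq 0$ and $\kappa>0$ (indeed the middle quantity is strictly positive in view of \eqref{eq:StrictMinimaOnNeighbourhood}). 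By hypothesis the right-hand side tends to $0$ as $n\to\infty$, so the squeeze theorem forces $d\qty(\rho_n,\mathscr{C}_\ast)^2 \to 0$, and hence $d\qty(\rho_n,\mathscr{C}_\ast)\to 0$, which is the assertion.

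There is no serious obstacle here, as essentially all of the analytic content has been absorbed into Lemma~\ref{lem:LowerBound}; the only points that warrant care are (i) verifying that each $\rho_n$ genuinely lies in $\mathscr{N}\backslash\mathscr{C}_\ast$ so that the quadratic lower bound may be invoked term by term, and (ii) noting that at no stage do we require a nearest point of $\mathscr{C}_\ast$ to $\rho_n$ to exist — a point that would be delicate in the non-reflexive Banach space $\qty(\mathfrak{I}_1\qty(\mathscr{H}),\norm{\cdot}_1)$ — because Lemma~\ref{lem:LowerBound} already furnishes the estimate phrased through the infimum. The monotonicity (non-decreasing spectrum) assumption on $A$ enters only within Lemma~\ref{lem:LowerBound} and plays no explicit role in the present argument.
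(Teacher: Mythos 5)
Your proposal is correct: each $\rho_n$ lies in $\mathscr{N}\backslash\mathscr{C}_\ast$ so Lemma~\ref{lem:LowerBound} gives $\kappa\, d(\rho_n,\mathscr{C}_\ast)^2 \leq \inf_{\sigma\in\mathscr{C}_\ast}\trace\qty(A\qty(\rho_n-\sigma)) \leq \trace\qty(A\qty(\rho_n-\rho_\ast)) \to 0$, and the squeeze argument finishes. The paper itself defers this proof to the cited reference, but your derivation is the natural one implicit in how Lemma~\ref{lem:LowerBound} is set up, and your remarks on why no nearest-point projection onto $\mathscr{C}_\ast$ is needed are apt.
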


The following theorem is the main result of this article, which relates the stability notions defined above to an inequality for the generator of a candidate Lyapunov operator. 
\begin{theorem}\label{thm:LyapunovStability}
	Let $V \in \mathfrak{B}\qty(\mathscr{H})$ be a self-adjoint operator with non decreasing spectrum value such that
		\begin{dmath}
		{\inf_{\rho_\ast \in \mathscr{C}_\ast}\trace\qty(V\qty(\rho - \rho_{\ast})) > 0}, {\forall \rho \in \mathfrak{S}\qty(\mathscr{H}) \backslash  \mathscr{C}_\ast}.
		\label{eq:V_rho_ast_0}
	\end{dmath}
	where $C$ is a real constant. Using the notation of Definition \ref{def:StabilityOfDensityOperator} 
	\begin{enumerate}
		\item If
		\begin{equation}
		\trace\qty(\mathcal{L}\qty(V)\rho) \leq 0, \; {\forall \rho \in \mathscr{N}\backslash \mathscr{C}_\ast}, \label{eq:LyapunovStability}
		\end{equation}
		then $\mathscr{C}_\ast$ is Lyapunov stable.
		
		\item If 
		\begin{equation}
		\trace\qty(\mathcal{L}\qty(V)\rho) < 0, \; {\forall \rho \in \mathscr{N}\backslash \mathscr{C}_\ast},
		\label{eq:AsymptoticallyStableCondition}
		\end{equation}
		then $\mathscr{C}_\ast$ is locally asymptotically stable.
		
		\item If there exists $\gamma >0$ and $\zeta \in \mathbb{R}$ such that
		\begin{equation}
		\trace\qty(\mathcal{L}\qty(V)\rho) \leq -\gamma \trace\qty(V\rho) + \zeta <0  \; {\forall \rho \in \mathscr{N}\backslash \mathscr{C}_\ast}, \label{eq:ExponentiallyStableCondition}
		\end{equation}
		then $\mathscr{C}_\ast$ is locally exponentially stable.
	\end{enumerate}
\end{theorem}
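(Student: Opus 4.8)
The plan is to take $W(\rho):=\inf_{\rho_\ast\in\mathscr{C}_\ast}\trace(V(\rho-\rho_\ast))=\trace(V\rho)-c_V$, with $c_V:=\sup_{\rho_\ast\in\mathscr{C}_\ast}\trace(V\rho_\ast)$ (finite, as $\abs{\trace(V\rho_\ast)}\le\norm{V}_\infty$), as a Lyapunov functional and to mimic the classical Lyapunov/LaSalle arguments. Note that $W$ is affine and $\norm{\cdot}_1$-continuous (even weakly continuous, $V$ being bounded), that $W>0$ on $\mathfrak{S}(\mathscr{H})\backslash\mathscr{C}_\ast$ by \eqref{eq:V_rho_ast_0}, and $W\le 0$ on $\mathscr{C}_\ast$. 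Two facts will carry the argument. The first is the generator duality $\trace(\mathcal{L}(V)\rho)=\trace(V\mathcal{L}_\ast(\rho))$, obtained by differentiating $\trace(\mathcal{T}_t(V)\rho)=\trace(V\mathcal{S}_t(\rho))$ at $t=0$; since $\mathcal{S}_t$ is uniformly continuous, $t\mapsto\rho_t$ is $\norm{\cdot}_1$-differentiable with $\dot\rho_t=\mathcal{L}_\ast(\rho_t)$, hence $\frac{d}{dt}W(\rho_t)=\trace(\mathcal{L}(V)\rho_t)$ with $\mathcal{L}(V)\in\mathfrak{B}(\mathscr{H})$, so \eqref{eq:LyapunovStability}--\eqref{eq:ExponentiallyStableCondition} become sign/decay conditions on $\dot W$ along trajectories confined to $\mathscr{N}\backslash\mathscr{C}_\ast$. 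The second is the sandwich $\kappa\,d(\rho,\mathscr{C}_\ast)^2\le W(\rho)\le\norm{V}_\infty\,d(\rho,\mathscr{C}_\ast)$ valid on $\mathscr{N}\backslash\mathscr{C}_\ast$: the lower bound is Lemma \ref{lem:LowerBound}, and the upper one comes from $\trace(V(\rho-\rho_\ast))\le\norm{V}_\infty\norm{\rho-\rho_\ast}_1$ on taking the infimum over $\rho_\ast$. I will also use forward invariance: if $\rho_{t_0}\in\mathscr{C}_\ast$ then $\rho_t=\rho_{t_0}$ for $t\ge t_0$, so a trajectory started off $\mathscr{C}_\ast$ either enters $\mathscr{C}_\ast$ in finite time (trivially giving $d(\rho_t,\mathscr{C}_\ast)\to 0$) or stays off it for all time.

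For part (1) I would fix $\varepsilon>0$ small enough that $\{\sigma\in\mathfrak{S}(\mathscr{H}):d(\sigma,\mathscr{C}_\ast)<\varepsilon\}\subseteq\mathscr{N}$, set $\eta:=\kappa\varepsilon^2$, and choose $\delta:=\min\{\varepsilon,\eta/\norm{V}_\infty\}$, so that $d(\rho_0,\mathscr{C}_\ast)<\delta$ forces $W(\rho_0)<\eta$ while $d(\rho,\mathscr{C}_\ast)=\varepsilon$ forces $W(\rho)\ge\eta$ (both by the sandwich). Then the standard first-exit argument applies: if such a trajectory first reached $d(\rho_{t_1},\mathscr{C}_\ast)=\varepsilon$, then $\rho_t\in\mathscr{N}\backslash\mathscr{C}_\ast$ on $[0,t_1)$, so $\dot W(\rho_t)\le 0$ there by \eqref{eq:LyapunovStability}, giving $W(\rho_{t_1})\le W(\rho_0)<\eta\le W(\rho_{t_1})$, a contradiction; hence $d(\rho_t,\mathscr{C}_\ast)<\varepsilon$ for all $t$.

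For part (2), since \eqref{eq:AsymptoticallyStableCondition} implies \eqref{eq:LyapunovStability}, part (1) already confines the trajectory to $\mathscr{N}$ for $d(\rho_0,\mathscr{C}_\ast)$ small; assuming it never meets $\mathscr{C}_\ast$, one has $W(\rho_t)>0$ and $\dot W(\rho_t)<0$, so $W(\rho_t)\downarrow L\ge 0$, and I would run a LaSalle-type argument to show $L=0$. From $\int_0^\infty(-\trace(\mathcal{L}(V)\rho_s))\,ds=W(\rho_0)-L<\infty$ with a strictly positive integrand there is $s_n\to\infty$ with $\trace(\mathcal{L}(V)\rho_{s_n})\to 0$; extracting a subsequence converging (Definition \ref{def:WeakConvergence}) to a density operator $\sigma$, which still lies in $\mathscr{N}$ because the confining set $\{d(\cdot,\mathscr{C}_\ast)\le\varepsilon\}\cap\mathfrak{S}(\mathscr{H})$ is convex and norm-closed hence weakly closed, weak continuity of $\trace(\mathcal{L}(V)\,\cdot\,)$ gives $\trace(\mathcal{L}(V)\sigma)=0$, which by \eqref{eq:AsymptoticallyStableCondition} forces $\sigma\in\mathscr{C}_\ast$; then $L=\lim_nW(\rho_{s_n})=W(\sigma)=\trace(V\sigma)-c_V\le 0$, so $L=0$, and by the lower sandwich (or by Lemma \ref{lem:WeakConverge}) $d(\rho_t,\mathscr{C}_\ast)\to 0$. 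I expect this LaSalle step to be the main obstacle: in infinite dimensions a norm-bounded subset of $\mathfrak{I}_1(\mathscr{H})$ need not be weakly sequentially compact and trace mass may escape under weak limits, so securing a genuine density operator --- rather than a sub-normalised positive operator --- as the $\omega$-limit point $\sigma$ is exactly where the hypothesis that $V$ has non-decreasing spectrum (underlying the tightness in Lemma \ref{lem:WeakConverge}) must be used; in finite dimensions the step is immediate by compactness of $\mathfrak{S}(\mathscr{H})$.

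For part (3) I would substitute $\trace(V\rho_t)=W(\rho_t)+c_V$ into \eqref{eq:ExponentiallyStableCondition} to get $\dot W(\rho_t)\le-\gamma W(\rho_t)+(\zeta-\gamma c_V)$, observe that the strict inequality in \eqref{eq:ExponentiallyStableCondition} together with \eqref{eq:V_rho_ast_0} forces $\zeta-\gamma c_V\le 0$, hence $\dot W(\rho_t)\le-\gamma W(\rho_t)$ on $\mathscr{N}\backslash\mathscr{C}_\ast$ (to which part (1) again confines the trajectory), and apply Gr\"onwall's inequality to obtain $W(\rho_t)\le e^{-\gamma t}W(\rho_0)$; translating this through the sandwich bounds of Lemma \ref{lem:LowerBound} and $W(\rho_0)\le\norm{V}_\infty\,d(\rho_0,\mathscr{C}_\ast)$ then yields an exponential estimate $d(\rho_t,\mathscr{C}_\ast)\le\beta\,d(\rho_0,\mathscr{C}_\ast)\,e^{-\gamma' t}$ for suitable $\beta,\gamma'>0$, and when $\mathscr{N}=\mathfrak{S}(\mathscr{H})$ nothing restricts $d(\rho_0,\mathscr{C}_\ast)$, giving the global statements of parts (2) and (3).
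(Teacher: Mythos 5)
Your proposal is correct in outline and, for parts 1 and 3, follows essentially the same route as the paper: part 1 is the paper's sublevel-set construction $\mathscr{B}_{\delta}\qty(\mathscr{C}_\ast)\subset\mathscr{N}_\beta\subset\mathscr{B}_{\varepsilon'}\qty(\mathscr{C}_\ast)$ with $\mathscr{N}_\beta=\qty{\rho:\trace\qty(V\rho)\leq\beta}$, which is your first-exit argument in invariant-set form and rests on exactly your sandwich $\kappa d\qty(\rho,\mathscr{C}_\ast)^2\leq W\qty(\rho)\leq\norm{V}_\infty d\qty(\rho,\mathscr{C}_\ast)$; part 3 is the same Gr\"onwall computation. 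Part 2 is where you genuinely diverge. The paper argues from monotonicity of $\trace\qty(V\rho_{t_n})$: it asserts a limit point $\rho_c$ with $\trace\qty(V\qty(\rho_{t_n}-\rho_c))\rightarrow 0$, places $\rho_c$ in $\mathscr{C}_\ast$ by a dynamical contradiction (if $\rho_c\notin\mathscr{C}_\ast$ the trajectory started at $\rho_c$ strictly decreases $\trace\qty(V\cdot)$, contradicting $\trace\qty(V\qty(\rho_c-\rho_n))<0$ for all $n$), and then invokes Lemma \ref{lem:WeakConverge}; you instead integrate $\dot W$, extract times along which $\trace\qty(\mathcal{L}\qty(V)\rho_{s_n})\rightarrow 0$, and identify the limit point from $\trace\qty(\mathcal{L}\qty(V)\sigma)=0$ and the strict sign condition. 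Both routes hinge on the same step you flag as the obstacle --- producing an honest density operator as an $\omega$-limit point --- and the paper does not resolve it either: it simply asserts the existence of $\rho_c\in\mathscr{N}_\beta$, so your concern about loss of trace mass under weak limits in infinite dimensions applies equally to the published argument. Your variant buys a cleaner identification of the limit point (a single application of the generator inequality rather than a second dynamical argument) at the cost of needing weak continuity of $\trace\qty(\mathcal{L}\qty(V)\cdot)$ along the subsequence; the two are otherwise interchangeable. One caveat shared by both treatments of part 3: since the lower bound on $W$ is quadratic and the upper bound linear in the distance, the final estimate has the form $d\qty(\rho_t,\mathscr{C}_\ast)\leq\beta\sqrt{d\qty(\rho,\mathscr{C}_\ast)}e^{-\gamma t/2}$, which does not literally reduce to the form $\beta\, d\qty(\rho,\mathscr{C}_\ast)e^{-\gamma t}$ required by Definition \ref{def:StabilityOfDensityOperator}; neither your write-up nor the paper closes that cosmetic but real discrepancy.
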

\begin{proof}
	Let us begin by proving the first part. Suppose $\varepsilon >0$ is selected. Then, we can take $\varepsilon' \in (0,\varepsilon]$ such that $\mathscr{B}_{\varepsilon'}\qty(\mathscr{C}_{\ast}) \subseteq \mathscr{N}$. Observe that by \eqref{eq:V_rho_ast_0} and Lemma \ref{lem:LowerBound}, there exists a $\kappa >0$ such that for any $\rho \in  \mathscr{B}_{\varepsilon'}\qty(\mathscr{C}_{\ast}) $
	\begin{dmath}
		\kappa d\qty(\rho,\mathscr{C}_\ast)^2 \leq \inf_{\rho_\ast \in \mathscr{C}_\ast} \trace\qty(V\qty(\rho-\rho_\ast)). \label{eq:NBetaWeaklyCompact}
	\end{dmath}
	Let $V_\ast = \sup_{\rho_\ast \in \mathscr{C}_\ast} \trace\qty(V\rho_\ast)$.
	Therefore, if we select $V_\ast < \beta < V_\ast + \kappa (\epsilon')^2$ then the set $\mathscr{N}_{\beta} = \qty{ \rho \in \mathscr{N} : \trace\qty(V\rho)\leq \beta}$
	is a strict subset of $\mathscr{B}_{\varepsilon'}\qty(\rho_{\ast})$.
	Furthermore, since $\beta > V_\ast$ and $\trace\qty(V\rho) \leq V_\ast + \norm{V}_\infty d\qty(\rho,\mathscr{C}_\ast)$, selecting $\delta < \qty(\beta - V_\ast)/\norm{V}_\infty$ implies $\trace\qty(V\rho) < \beta$ for all $\rho \in \mathscr{B}_{\delta}\qty(\mathscr{C}_{\ast})$.
	Therefore, we have the following relation
	\begin{dmath*}
		{\mathscr{B}_{\delta}\qty(\mathscr{C}_{\ast}) \subset \mathscr{N}_{\beta} \subset \mathscr{B}_{\varepsilon'}\qty(\mathscr{C}_{\ast})}.
	\end{dmath*}
	Therefore, $\rho \in \mathscr{B}_{\delta}\qty(\mathscr{C}_{\ast})$ implies $\rho \in \mathscr{N}_{\beta}$.  Since $\trace\qty(\mathcal{L}\qty(V)\rho) \leq 0$ for all $\rho \in \mathscr{N}\backslash \mathscr{C}_\ast$, if  system density operator $\rho$ is initially in $\mathscr{N}_{\beta}$, then the expected value of operator $V$ will be non-increasing, $
	{\trace\qty(V\rho_t) \leq \trace\qty(V\rho) \leq \beta, \forall t \geq 0}.$
	This implies that $\rho_t \in \mathscr{N}_{\beta}, \forall t \geq 0$, which shows $\rho_t \in \mathscr{B}_{\varepsilon'}\qty(\mathscr{C}_{\ast})$. Furthermore, this last implication implies that if initially $d\qty(\rho,\mathscr{C}_{\ast}) < \delta(\varepsilon)$, then $d\qty(\rho_t,\mathscr{C}_{\ast}) < \varepsilon$ for all $t \geq 0$.
	\\
	For the second part, using the same argument as in the previous part, we may choose $\delta>0$ such that initially $\rho \in \mathscr{B}_{\delta}\qty(\mathscr{C}_\ast) \subset \mathscr{N}_\beta \subset \mathscr{N}$, for some $\beta > V_\ast$. Therefore, since  $\trace\qty(\mathcal{L}\qty(V)\rho) < 0$ for all $\rho \in \mathscr{N}\backslash \mathscr{C}_\ast$, $\trace\qty(V\rho_t)$ is monotonically decreasing. Therefore $\trace\qty(V\rho_t)<\trace\qty(V\rho_s) < \trace\qty(V\rho) < \beta$ for any $0 < s < t$. Hence $\rho_s,\rho_t$ also belongs to $\mathscr{N}_\beta$. This implies that there exists a sequence of density operators $\qty{\rho_n}$, such that $\trace\qty(V\qty(\rho_m - \rho_n)) <0$ for any $m>n$, where $\rho_n \equiv \mathcal{S}_{t_n}\qty(\rho)$ and $0 \leq t_0 <t_1 < \cdots < t_n$, $t_n \rightarrow \infty$ as $n \rightarrow \infty$. Since the spectrum of $V$ is non-decreasing, $\trace\qty(V\rho_n)$ is lower bounded. Hence, there exists a $\rho_c \in \mathscr{N}_\beta$ such that $\lim\limits_{n\rightarrow \infty} \trace\qty(V\qty(\rho_n - \rho_c)) = 0$ and $\trace\qty(V\qty(\rho_c - \rho_n)) <0$ for all $n$. Suppose $\rho_c \notin \mathscr{C}_\ast$. Then for any $s>0$, $\trace\qty(V\mathcal{S}_s\qty(\rho_c)) < \trace\qty(V\rho_c)$. Therefore, there exists an $n$ such that $\trace\qty(V\qty(\rho_c - \rho_n)) >0$, which is a contradiction. Therefore, $\rho_c \in \mathscr{C}_\ast$. Lemma \ref{lem:WeakConverge} and \eqref{eq:dtoC_ast} then imply that $\lim\limits_{n\rightarrow \infty} d\qty(\rho_n, \mathscr{C}_\ast) \leq \lim\limits_{n\rightarrow \infty} \norm{\rho_n - \rho_c}_1 = 0$.
	\\    
	For the global exponentially stable condition, the previous part shows that the negativity of $\trace\qty(\mathcal{L}\qty(V)\rho)$ for all $\rho \in \mathscr{N}\backslash \mathscr{C}_\ast$ implies the existence of a $\rho_c \in \mathscr{C}_\ast$ such that $\lim\limits_{t\rightarrow \infty} \norm{\rho_t-\rho_c }_1=0$. Using the First Fundamental Lemma of Quantum Stochastic Calculus \cite[Prop 25.1]{Parthasarathy1992} to switch the order of the integration and quantum expectation; see also \cite[Prop 26.6]{Parthasarathy1992}, we obtain
	\begin{dmath*}
		\trace\qty(V\rho_t) - \trace\qty(V\rho_s)  = \Paverage{V_t} - \Paverage{V_s} =  \Paverage{\int_{s}^{t} \mathcal{G}\qty(V_{\tau}) d\tau}  = \int_{s}^{t} \Paverage { \mathcal{G}\qty(V_{\tau})} d\tau.
	\end{dmath*}
	Therefore, by \eqref{eq:ExponentiallyStableCondition} we obtain 
	\begin{dmath*}
		\trace\qty(V\rho_t) \leq \qty(\trace\qty(V\rho_s) - \dfrac{\zeta}{\gamma})e^{\gamma(s-t)} + \dfrac{\zeta}{\gamma}.
	\end{dmath*}
	Taking $s=0$, there exists $\kappa >0$ such that $\kappa d(\rho_t,\mathscr{C}_\ast)^2 \leq \kappa \norm{\rho_t - \rho_c}_1^2 \leq \trace\qty(V(\rho_t - \rho_c)) \leq \trace(V\rho_t) - \frac{\zeta}{\gamma} \leq \qty(\trace(V\rho) - \frac{\zeta}{\gamma})e^{- \gamma t}$. Consequently, we obtain
	\begin{dmath*}
		k d(\rho_t,\mathscr{C}_\ast)^2  \leq  \qty(\trace(V\rho) -\frac{\zeta}{\gamma})e^{-\gamma t},
	\end{dmath*}
	which completes the proof.	
\end{proof}
\begin{remark}
	In contrast to the stability conditions given in \cite{Pan2014}, we do not require $V$ to be coercive, nor we demand it to commute with the Hamiltonian of the system \cite{Pan2016}. We show in Theorem \ref{thm:LyapunovStability} that less restrictive conditions on both $V$ and $\mathcal{L}(V)$, \eqref{eq:V_rho_ast_0},\eqref{eq:AsymptoticallyStableCondition} are sufficient to guarantee the convergence of the density operator evolution to the set of invariant density operators. 
\end{remark}
\begin{remark}
	We can use Theorem \ref{thm:LyapunovStability} to strengthen many results in the coherent control of quantum systems. In fact, the differential dissipative inequality given in \cite[Thm 3.5]{James2010} and those which is given as an {LMI} in \cite[Thm 4.2]{James2008} explicitly imply global exponential and asymptotic stability conditions, provided that the storage function in \cite{James2010} and in \cite{James2008} have  global minima at the invariant density operator $\rho_\ast$. 
\end{remark}

\subsection{Examples}
To illustrate the application of the Lyapunov stability conditions we have derived, we consider the following examples. 
\begin{example}\label{exam:LinearQuantumSystem}
	Consider a linear quantum system $\mathcal{P}$, with $\mathbb{H} =  (a - \alpha 1)^\dagger (a - \alpha 1), \alpha \in \mathbb{C}$, $\mathbb{L} = \sqrt{\kappa} \qty(a - \alpha1)$, and $\mathbb{S} = 1$. Evaluating the steady state of \eqref{eq:MasterEquation} we know that the invariant density operator is a coherent density operator with amplitude $\alpha$; i.e., $\rho_{\ast} = \ket{\alpha}\bra{\alpha}$. Now, choose the Lyapunov observable $V = \mathbb{H}$. Straightforward calculation of $\mathcal{L}(V)$ using \eqref{eq:QuantumMarkovianGeneratorForX} gives, 
	\begin{equation*}
	\mathcal{L}\qty(V) = -\kappa (N -\frac{1}{2}\qty(\alpha a^\dagger + \alpha^\ast a ) + \abs{\alpha}^2 1). 
	\end{equation*}
	Notice that $\trace\qty(\mathcal{L}\qty(V)\rho)< 0$ for all density operators other than $\rho_{\ast} = \ket{\alpha}\bra{\alpha}$. To verify this inequality, it is sufficient to take $\rho = \ket{\beta}\bra{\beta}$ with $\beta \neq \alpha$. This follows since every state vector $\ket{\psi}$ can be expressed as a limit of infinite sums of coherent state vectors; i.e., the set of coherent state vectors is total in $\mathscr{H}$; see \cite[\textsection 3.5]{Gerry2004}. Therefore, for $\rho_t = \ket{\beta}\bra{\beta}$, we obtain $\trace \qty(V \rho_t) = \abs{\beta}^2 + \abs{\alpha}^2 -\qty(\alpha^\ast \beta+\beta^\ast \alpha)$ and  $\trace\qty(\mathcal{L}\qty(V)\rho_t) = -\kappa \trace \qty(V \rho_t) + \kappa/2 \qty(\alpha^\ast \beta+\beta^\ast \alpha) <0$. Hence, 
	Theorem \ref{thm:LyapunovStability} indicates that the invariant density operator is exponentially stable. 
\end{example}

\begin{example}
	Consider a nonlinear quantum system with zero Hamiltonian and a coupling operator $\mathbb{L} = (a^2 - \alpha^21) $,  where $\alpha$ is a complex constant.
	To find the invariant density operators of this quantum system we need to find the eigenvectors of $a^2$. Without loss of generality, let $\ket{z}$ be one of the eigenvectors of $a^2$, such that $a^2\ket{z} = \alpha^2 \ket{z}$. Expanding $\ket{z}$ in the number state orthogonal basis, we can write, $a^2 \ket{z} = \sum_{n=0}^{\infty}  a^2 c_n \ket{n}$. Therefore, we find that, $\alpha^2 c_{n-2} = c_n \sqrt{n\qty(n-1)}$. By mathematical induction, we have for $n$ even, $c_n = c_0 \alpha^n/\sqrt{n!}$, and for $n$ odd, $c_n = c_1 \alpha^n/\sqrt{n!}$. Therefore, we can write the eigenvector of $a^2$ as,
	\begin{align*}
	\ket{z} =& c_0 \sum_{n=0, n \text{even}}^{\infty} \dfrac{\alpha^n}{\sqrt{n!}} \ket{n} + c_1 \sum_{n=1, n \text{odd}}^{\infty} \dfrac{\alpha^n}{\sqrt{n!}} \ket{n}.
	\end{align*}
	By observing that a coherent vector with magnitude $\alpha$, is given by 
$	\ket{\alpha} = \exp(-\frac{\abs{\alpha}^2}{2})\sum_{n=0}^{\infty} \frac{\alpha^{n}}{\sqrt{n!}} \ket{n},$
	we can write
$
	\ket{z} = c_0 \exp(\frac{\abs{\alpha}^2}{2}) \frac{\qty(\ket{\alpha} + \ket{-\alpha})}{2} + c_1 \exp(\frac{\abs{\alpha}^2}{2}) \frac{\qty(\ket{\alpha} - \ket{-\alpha})}{2}.
$	
	Normalization of $\ket{z}$ shows that $c_0$ and $c_1$ satisfy an elliptic equation,
$
	\abs{c_0}^2 \cosh(\abs{\alpha}^2)+ \abs{c_1}^2 \sinh(\abs{\alpha}^2) = 1.
$
	Therefore, we can write	the solution of $\mathbb{L} \ket{z} = 0 \ket{z}$ by the following set:
$
	\mathscr{Z}_\ast = \qty{\ket{z}\in \mathscr{H}: \ket{z} = C_0\frac{\qty(\ket{\alpha} + \ket{-\alpha})}{2} + C_1\frac{\qty(\ket{\alpha} - \ket{-\alpha})}{2}}, 
$	
	where $C_0 = c_0 \exp(\frac{\abs{\alpha}^2}{2})$ and $C_1 = c_1 \exp(\frac{\abs{\alpha}^2}{2})$. The set of invariant density operators of this quantum system is a convex set $\mathscr{C}_\ast$ which is given by
$
	\mathscr{C}_\ast = \qty{\sum_i \lambda_i \ket{\beta_i}\bra{\beta_i}:   \ket{\beta_i} \in \mathscr{Z}_\ast}, 
$	
	where $\lambda_i \geq 0, \sum_i \lambda_i = 1$. Suppose we select a Lyapunov candidate $V = \mathbb{L}^\dagger \mathbb{L}$. One can verify that $\trace\qty(\rho V) = 0$ for all $\rho$ belonging to $\mathscr{C}_\ast$, and has a positive value outside of this set.
	Straightforward calculation of the quantum Markovian generator of $V$ using \eqref{eq:QuantumMarkovianGeneratorForX} gives us the following
$
	\mathcal{G}\qty(V_t) = - \qty(4 \mathbb{L}_t^\dagger N_t \mathbb{L}_t + 2 V_t).
$
	Outside the set $\mathscr{C}_\ast$, the generator $\mathcal{G}\qty(V_t)$ has a negative value.  Therefore, Theorem \ref{thm:LyapunovStability} implies that the set $\mathscr{C}_\ast$ is globally exponentially stable.\\ 
	Figure \ref{fig:Example3PhaseSpace} illustrates the phase-space of the system corresponding to various initial density operators.  This figure shows that each distinct trajectory converges to a different invariant density operator, all belonging to the set of invariant density operators $\mathscr{C}_\ast$. Moreover, Figure \ref{fig:Example3Lyapunov} shows the Lyapunov operator expected values. This figure also shows that although each trajectory converges to a distinct invariant density operator, their Lyapunov expected values all converge to zero. 
	 
	\begin{figure}[!h]
		\centering
		\includegraphics[width=0.50\textwidth]{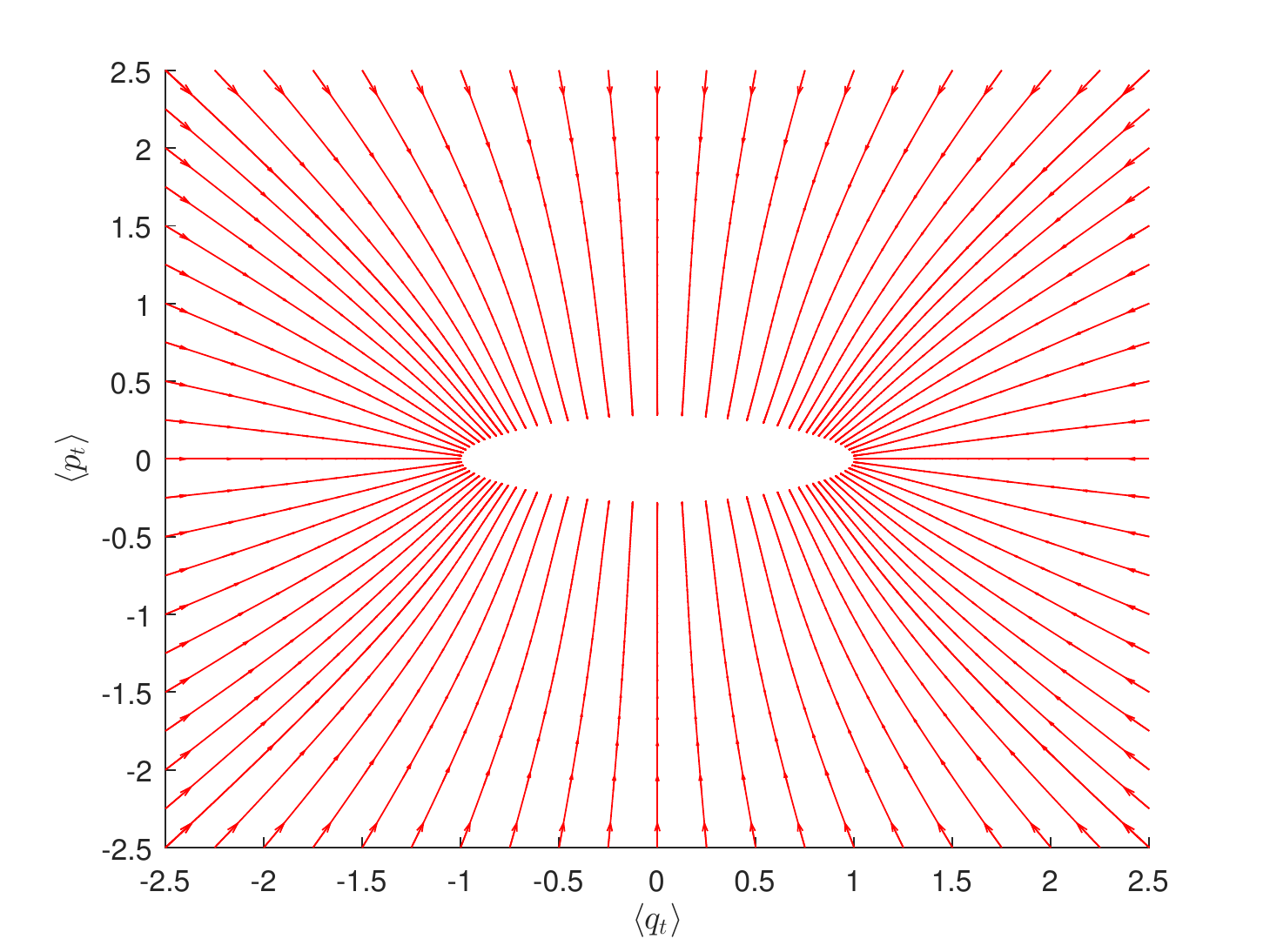}
		\caption[Phase space of the quantum system in Example 2.]{Trajectories of the quantum system in Example 2, simulated using the corresponding master equations. Each line corresponds to a different initial density operator.} 
		\label{fig:Example3PhaseSpace}
	\end{figure}
	\begin{figure}[!h]
	\centering
	\includegraphics[width=0.50\textwidth]{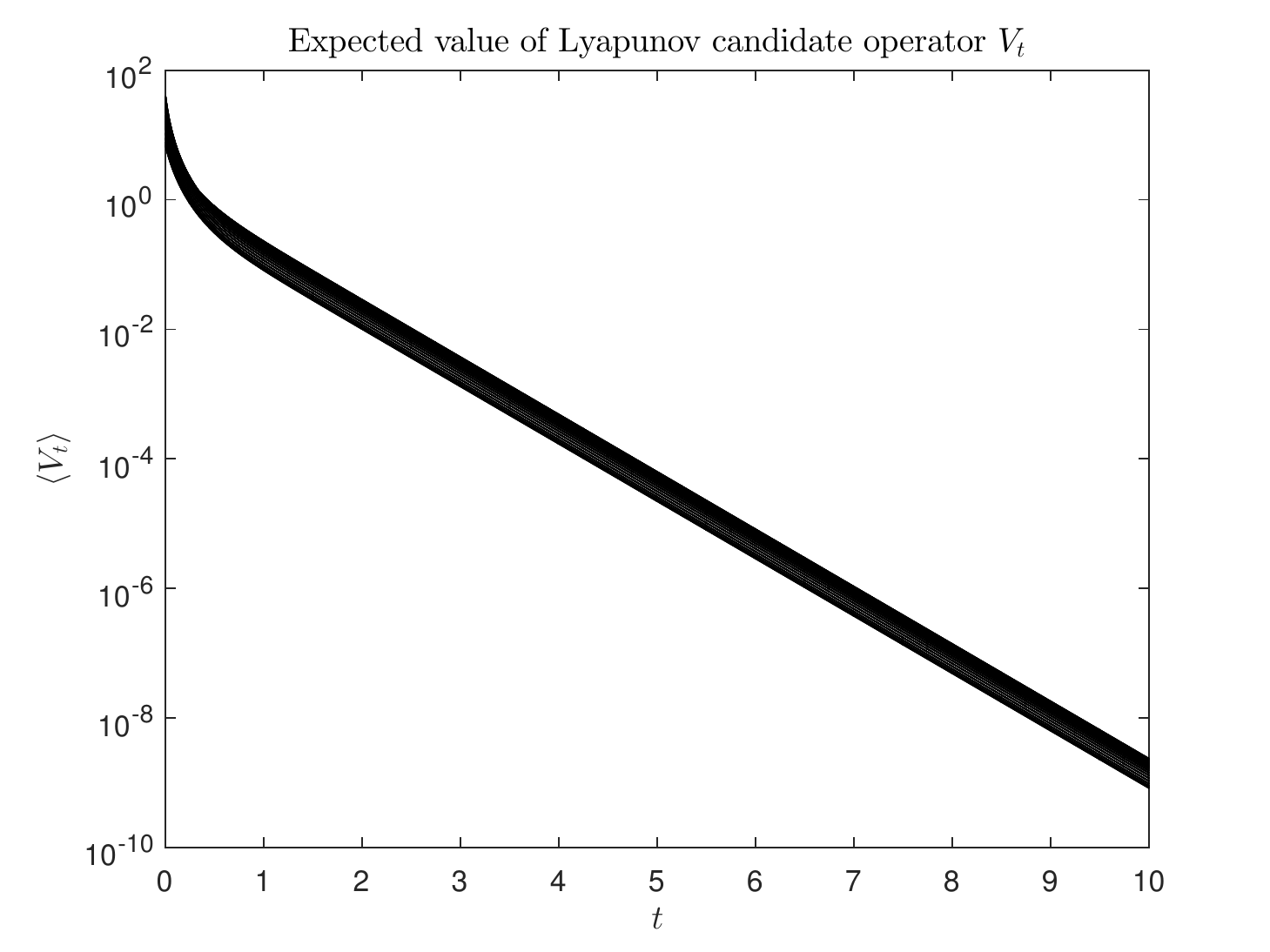}
	\caption[Lyapunov operator expected value of the quantum system in Example 2.]{Lyapunov operator expected value of the quantum system in Example 2.} 
	\label{fig:Example3Lyapunov}
	\end{figure}
\end{example}
\section{Conclusion}
In this article, we have proposed a Lyapunov stability approach for open quantum systems to investigate the convergence of the system's density operator in $\norm{ \cdot }_1$. This stability condition is stronger compared to the finite moment convergence that has been considered for quantum systems previously. 
\\
We have proven that the set of invariant density operators of any open quantum system is both closed and convex. Further, we have shown how to analyze the stability of this set via a Lyapunov candidate operator. 
\\
We have also demonstrated that a quantum system where the generator of its Lyapunov observable is non-negative has at least one invariant density operator.  This connection offers a straightforward approach to verify both the Lyapunov stability condition and the existence of an invariant density operator.
\section{Acknowledgments}
The authors acknowledge discussions with Dr. Hendra Nurdin of UNSW.

\bibliographystyle{IEEEtran}
\bibliography{ReferenceAbbrvBibLatex}
\end{document}